\documentclass[10pt,twoside]{amsart}
 \usepackage{amsmath,amssymb,amsthm,amscd,latexsym,graphicx}
 \usepackage{epigraph}
 \usepackage[OT2,OT1]{fontenc}
 \usepackage[mathscr]{euscript}

\usepackage{enumitem}

\newcommand\scalemath[2]{\scalebox{#1}{\mbox{\ensuremath{\displaystyle #2}}}}

\usepackage{graphicx}
\usepackage[all]{xy}
\usepackage{hyperref}

 \oddsidemargin1.91cm\evensidemargin1.91cm\voffset1.2cm

 \textwidth12.5cm\textheight21.5cm

 \font\caps=cmcsc10                    
 \font\Caps=cmcsc10 scaled \magstep1   

 %


 \pagestyle{myheadings}
 \pagenumbering{arabic}
\setcounter{page}{1}

 \makeatletter
 \setlength\topmargin {14\p@}
 \setlength\headsep   {15\p@}
 \setlength\footskip  {25\p@}
 \setlength\parindent {0\p@}
 \@specialpagefalse\headheight=8.5pt
 \makeatother

 \def\TSkip{\medskip}
 \newbox\TheTitle{\obeylines\gdef\GetTitle #1
 \ShortTitle  #2
 \SubTitle    #3
 \Author      #4
 \ShortAuthor #5
 \EndTitle
 {\setbox\TheTitle=\vbox{\baselineskip=20pt\let\par=\cr\obeylines%
 \halign{\centerline{\Caps##}\cr\noalign{\medskip}\cr#1\cr}}%
         \copy\TheTitle\TSkip\TSkip%
 \def\next{#2}\ifx\next\empty\gdef\STitle{#1}\else\gdef\STitle{#2}\fi%
 \def\next{#3}\ifx\next\empty%

 \else\setbox\TheTitle=\vbox{\baselineskip=20pt\let\par=\cr\obeylines%
     \halign{\centerline{\caps##} #3\cr}}\copy\TheTitle\TSkip\TSkip\fi%
 \centerline{\caps #4}\TSkip\TSkip%
 \def\next{#5}\ifx\next\empty\gdef\SAuthor{#4}\else\gdef\SAuthor{#5}\fi%
 \catcode'015=5}}\def\Title{\obeylines\GetTitle}

 \def\Abstract{\begingroup\narrower
     \parskip=\medskipamount\parindent=0pt{\caps Abstract. }}

 \long\def\MSC#1\EndMSC{\def\arg{#1}\ifx\arg\empty\relax\else
      {\par\narrower\noindent%
      2000 Mathematics Subject Classification: #1\par}\fi}

 \long\def\KEY#1\EndKEY{\def\arg{#1}\ifx\arg\empty\relax\else
         {\par\narrower\noindent Keywords and Phrases: #1\par}\fi\TSkip}

 \long\def\DATE#1\EndDATE{\def\arg{#1}\ifx\arg\empty\relax\else
         {\par\narrower\noindent \center{\textit{#1}}\par}\fi\TSkip\TSkip\TSkip}

 \hfuzz=0.1pt\tolerance=2000\emergencystretch=20pt\overfullrule=5pt


 \font\bf= cmbx10 at 10pt


 \newcommand{\lra}{\longrightarrow}
 \newcommand{\lla}{\longleftarrow}


 \newcommand{\A}{\mathbb A}

 \newcommand{\F}{\mathbb{F}}

 \newcommand{\N}{\mathbb{N}}

 \newcommand{\Z}{\mathbb{Z}}


 \newcommand{\GL}{\mathrm{GL}}


  \newcommand{\End}{\mathrm{End}}
 \newcommand{\Ext}{\mathrm{Ext}}

 \newcommand{\Hom}{\mathrm{Hom}}
 
  \newcommand{\Ver}{\mathrm{Ver}}
   \newcommand{\Frob}{\mathrm{Frob}}

 \newcommand{\M}{\mathcal{M}}

\newcommand{\Id}{\mathrm{Id}}

\newcommand{\Ker}{\mathrm{Ker}}
 
 \newcommand{\Spec}{\mathrm{Spec}}


\renewcommand{\M}{\mathcal{M}}
\newcommand{\W}{\mathbf{W}}
\newcommand{\E}{\mathcal{E}}

\newcommand{\YEExt}{\mathbf{YExt}}
\newcommand{\YExt}{\mathrm{YExt}}


 \theoremstyle{plain}
 \newtheorem{thm}{Theorem}[section]
 \newtheorem{defi}[thm]{Definition}

 \newtheorem{prop}[thm]{Proposition}

 \newtheorem{lem}[thm]{Lemma}
 \newtheorem{coro}[thm]{Corollary}

 \theoremstyle{remark}
 \newtheorem{rem}[thm]{Remark}

    \newtheorem{notation}[thm]{Notation}
 \newenvironment{dem}{{\bf Proof.}}{\hfill$\square$}

\date{December 2018}

\setcounter{tocdepth}{2}
\newif\ifquoteopen
\catcode`\"=\active 
\DeclareRobustCommand*{"}{%
   \ifquoteopen
     \quoteopenfalse ''%
   \else
     \quoteopentrue ``%
   \fi
}
\begin{document}


 \Title
Lifting low-dimensional local systems
 \ShortTitle

\SubTitle
 \Author
 Charles De Clercq\footnote{Partially supported by French ministries of Foreign Affairs and of Education and Research (PHC Sakura- New Directions in Arakelov Geometry).} and Mathieu Florence
 \ShortAuthor
 \EndTitle

\address{Charles De Clercq, Equipe Topologie Alg\'ebrique, Laboratoire Analyse, G\'eom\'etrie et Applications, Universit\'e Sorbonne Paris Nord, 93430 Villetaneuse.}
\address{Mathieu Florence, Equipe de Topologie et G\'eom\'etrie Alg\'ebriques, Institut de Math\'ematiques de Jussieu,  Sorbonne Universit\'e, 4, place Jussieu, 75005 Paris. }

\Abstract
Let $k$ be a field of characteristic $p>0$. Denote by  $\W_r(k)$ the ring of truntacted Witt vectors of length $r \geq 2$, built out of $k$. In this text,  we consider the following question, depending on a given profinite group $G$.

$Q(G)$: Does every (continuous) representation  $G\lra \GL_d(k)$ lift to a representation $G\lra \GL_d(\W_r(k))$?

We work in the class of cyclotomic pairs (Definition \ref{deficyclopair}), first introduced in \cite{DCF} under the name  "smooth profinite groups". Using Grothendieck-Hilbert' theorem 90,  we show that the algebraic fundamental groups of the following schemes are cyclotomic: spectra of semilocal rings over  $\Z[\frac 1 p]$, smooth curves over algebraically closed fields, and affine schemes over $\F_p$. In particular,  absolute Galois groups of  fields  fit into this class. We then give a positive partial answer to $Q(G)$, for   a cyclotomic profinite group $G$: the answer is positive, when $d=2$ and $r=2$. When $d=2$ and $r=\infty$, we  show that any  $2$-dimensional representation of  $G$ \textit{stably} lifts to  a representation over $\W(k)$: see Theorem \ref{t1}. \\When $p=2$ and $k=\F_2$, we prove the same results, up  to dimension $d=4$. \\We then give a concrete application to algebraic geometry: we prove that local systems of low dimension lift Zariski-locally (Corollary \ref{tf}).

\tableofcontents

\section{Introduction}

Let $k$ be a field of characteristic $p$ and let $G$ be a profinite group. This paper deals with the deformation theory  (more accurately, the liftability)  of continuous representations  $$\rho:G\longrightarrow \GL_d(k),$$
 ultimately with coefficients in the ring of Witt vectors $\W(k)$. A fundamental instance is given by Galois representations. \\Existence of such lifts has been extensively investigated, in the case of absolute Galois groups of local and global fields. In \cite{K}, Khare proves the existence of lifts to  $\W(k)$, for $2$-dimensional reducible representations, in the case where $G$ is the absolute Galois group of a number field $F$, and when $k$ is a finite field. As noticed by Serre, the proof actually works for any field $F$. If $G$ is the absolute Galois group of $\mathbb{Q}$, and under mild assumptions, such lifts  exist more generally, by the work of Ramakrishna \cite{R}. Some time after the present text was released, Khare and Larsen (\cite{KL}) proved  that the answer to $Q(G)$ is positive, when $G$ is the absolute Galois group of a non-archimedean local field, or a global field, when $k=\F_p$ for odd $p$, $d\leq 3$ and $r=2$.\\
A class of profinite groups, whose mod $p$ representations are likely to lift mod $p^2$,  was first introduced in \cite{DCF} under the name \emph{smooth profinite groups}. Due to the recent progress made in the series of papers \cite{DCF1}, \cite{F2} and   \cite{DCF3}, it is now clear that one should distinguish between the notions of \emph{smooth profinite groups} and of \emph{cyclotomic pairs}. In the present paper, we focus on cyclotomic pairs. Loosely speaking, a cyclotomic pair consists of a profinite group, equipped with a so-called cyclotomic module, which will play the role of the cyclotomic character in Galois cohomology- see Definition \ref{deficyclopair} . We say that a profinite group is cyclotomic, if it fits into a cyclotomic pair.\\

The main contribution of this paper, is to give important examples of cyclotomic profinite groups among algebraic fundamental groups, using Kummer and Artin-Schreier theory.\\
More precisely, in Propositions \ref{semiloccyclotomic}, \ref{charpcyclotomic},  \ref{curvecyclotomic} and \ref{vastcyclotomic},  we show that the fundamental group $\pi_1(S,\overline s)$, of a given scheme $S$ at a geometric point $\overline s$, is cyclotomic in each of the following cases.
\begin{enumerate}[label=\alph*)]
    \item $S$ is  a semilocal $\Z[\frac 1 p]$-scheme;
    
    \item $S$ is an affine $\F_p$-scheme;
    
    \item $S$ is a smooth curve, over an algebraically closed field.
     \item More generally, $S$ is a smooth projective variety  over an algebraically closed field, such that for every finite \'etale cover $U/S$, the N\'eron-Severi group of $U$ is torsion-free.
\end{enumerate}Note that, in all four cases, the cyclotomic module is taken to be the Tate module of roots of unity $\Z_p(1)$ when $p$ is invertible on $S$, or the trivial module $\Z_p$ when $p$ vanishes on $S$. In particular, absolute Galois groups of fields are cyclotomic profinite groups.

Our next result is Theorem \ref{t1}: continuous representations  of  dimension $2$ of a cyclotomic profinite group (e.g. of  type a), b), c) or d) above), with values in an \textit{arbitrary} field $k$ of characteristic $p>0$ (possibly infinite), lift to $p^2$-torsion coefficients. They also \textit{stably} lift to arbitrary torsion (see Definition \ref{stablelift} for the notion of stable lifting). \\For $p=2$ and $k= \F_2$, we prove the same results, for representations of dimension \textit{up to $4$}. After this paper was first drafted, the recent text \cite{F2} was released, in which it is proved that mod $p$ representations of a cyclotomic profinite group lift mod $p^2$- in all dimensions $d$. The proof involves a delicate new technology. Our theorem \ref{t1} here is the particular case $d=2$; it is much easier to read.\\

The paper is structured as follows. In section \ref{SectionWitt}, we recall the machinery  of Witt vectors and of Yoneda extensions, which is  a convenient computational tool in our proofs. We give precise definitions of what is meant by "lifting" in section \ref{SectionLift}. In section \ref{SectionCyclo}, we recall the notion of a cyclotomic pair.  In the remaining sections, we prove the lifting theorem  and deal with its applications.\\



\section{Modules over Witt vectors and Yoneda extensions}\label{SectionWitt}

Fixing a field $k$ of characteristic $p>0$, we consider the ring $\W(k)$ of Witt vectors built out of $k$. Recall that if $k$ is perfect, $\W(k)$ is the unique complete discrete valuation ring of characteristic $0$ whose uniformizer is $p$ and residue field is $k$. We shall also consider the truncated Witt vectors of size $r\geq 1$, defined by the quotient $$\W_r(k):=\W(k)/\mathrm{Ver}^r(\W(k)),$$ where $\mathrm{Ver}:\W(k) \lra \W(k) $ denotes the Verschiebung endomorphism. We set $\W_{\infty}(k):=\W(k)$. Note that if $k$ is perfect, we have $\W_r(k)=\W(k)/p^r\W(k)$.

\begin{defi}
Let $r\in \mathbb{N}^{\ast}\cup \{\infty\}$. Let $M$ be a $\W_r(k)$-module of  finite type.\\ We endow it with the topology having the submodules $ \Ver^i(\W(k)) M$, $i\in \N$, as a basis for open neighborhoods of $0$. This is simply the discrete topology if  $r < \infty$.\\
Note also that,  if $k$ is perfect, this defines the $p$-adic topology on $M$.
\end{defi}
\begin{defi}\label{wkgmod}
Let $G$ be a profinite group, and let $r\in \mathbb{N}^{\ast}\cup \{\infty\}$.

A $(\W_r(k),G)$-module is a $\W_r(k)$-module of finite type, endowed with a continuous $\W_r(k)$-linear action of $G$.\\In particular, for $r=1$, a $(k,G)$-module is a finite-dimensional $k$-vector space endowed with an action of $G$, that factors through an open subgroup of $G$.\\
If $V$ is a  $(\W_r(k),G)$-module, we set \[ V^\vee:= \Hom_{\W_r(k)}(V, \W_r(k)).\] We refer to $V^\vee$ as the Pontryagin dual of $V$. \\If $k$ is perfect, then Pontryagin duality is perfect, in the sense that the natural arrow $$ V \lra V ^{\vee \vee} $$ is an isomorphism.

\end{defi}
If $G$ is a profinite group and $V$ is a $G$-module, we will denote by $H^n(G,V)$ the $n$-th cohomology group of $G$, with values in $V$. If $V$ is equipped with the discrete topology, it is taken in the sense of \cite{Se}. Otherwise (e.g. when $V=\Z_p(n)$), it is in the sense of Tate's continuous cohomology. In the context of the present paper, laying too much stress on continuity issues would, we believe, be smoke and mirrors.\\  The abelian categories $\mathcal{M}(\W_r(k),G)$ (resp. $\mathcal{M}(k,G)$) of $(\W_r(k),G)$-modules (resp. $(k,G)$-modules) are monoidal through the tensor product. For any positive integer $n$ and $A,B \in \M( \W_r(k),G)$, one can then define the notion of Yoneda $n$-extensions of $B$ by $A$, as follows. First, define $\YExt_{( \W_r(k),G)}^0(B,A)$ as $\Hom_{(\W_r(k),G)}(B,A)$.

A $n$-extension of $B$ by $A$ is an exact sequence of $(\W_r(k),G)$-modules $$\E: 0 \lra A \lra A_1 \lra \ldots \lra A_{n}\lra B \lra 0.$$ Setting morphisms $\E_1\lra \E_2$ between two $n$-extensions of $B$ by $A$ to be morphisms of complexes for which the induced morphisms between $A$ and $B$ are the identity maps, we get the category $\YEExt_{( \W_r(k),G)}^n(B,A)$ of Yoneda extensions, of $B$ by  $A$ and of size $n$. It is additive through the Baer sum.

Any morphism of $(\W_r(k),G)$-modules $f:A \lra A'$ (resp. $g:B' \lra B$) induces a pushforward functor $$f_*:  \YEExt_{( \W_r(k),G)}^n(B,A) \lra \YEExt_{( \W_r(k),G)}^n(B,A'), $$ resp. a pullback functor $$g^*: \YEExt_{( \W_r(k),G)}^n(B,A) \lra \YEExt_{( \W_r(k),G)}^n(B',A).$$ Those functors commute, in the sense that $f_* g^*$ and $g^* f_*$ are canonically isomorphic.

Let us say that two Yoneda extensions $\E_1$ and $\E_2$ in $\YEExt_{(\W_r(k),G)}^n(B,A)$ are linked if there exists a third extension $\mathcal{E}\in \YEExt_{(\W_r(k),G)}^n(B,A)$ and morphisms of $n$-extensions
$$\E_1 \lla \E  \lra \E_2 .$$
In our setting, this indeed defines an equivalence relation between elements of $\YEExt_{(\W_r(k),G)}^n(B,A)$, compatible with  Baer sum.

\begin{defi}
 We denote by $\YExt_{( \W_r(k),G)}^n(B,A)$   the Abelian group of  equivalence classes of Yoneda $n$-extensions, in the category  $\YEExt_{( \W_r(k),G)}^n(B,A)$.
\end{defi}

\begin{prop}\label{extcoh}
Let $r\in \mathbb{N}^{\ast}\cup \{\infty\}$, and let $V$ be  a $(\W_r(k),G)$-module. Then, for any $n \geq 0$, there is a  canonical isomorphism $$ \YExt_{(\W_r(k),G)}^n(\W_r(k),V) \simeq H^n(G,V).$$
\end{prop}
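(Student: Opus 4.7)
The plan is to construct a natural map
$$\Phi^n \colon \YExt^n_{(\W_d(k),G)}(\W_d(k), V) \longrightarrow H^n(G, V)$$
via iterated connecting homomorphisms, and to prove it is bijective by induction on $n$. Given an $n$-extension $\mathcal{E} \colon 0 \to V \to A_1 \to \cdots \to A_n \to \W_d(k) \to 0$, I would split it into $n$ short exact sequences $0 \to B_i \to A_i \to B_{i+1} \to 0$ (with $B_1 = V$, $B_{n+1} = \W_d(k)$ and $B_{i+1} := \im(A_i \to A_{i+1})$ otherwise), which live in $\mathcal{M}(\W_d(k), G)$ as quotients of finite-type modules. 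The associated connecting homomorphisms compose into a map $H^0(G, \W_d(k)) \to H^n(G, V)$, and I set $\Phi^n(\mathcal{E})$ to be the image of $1 \in \W_d(k)^G$. Naturality of connecting maps ensures $\Phi^n$ descends to Yoneda equivalence classes and respects the Baer sum. For $n = 0$ both groups are canonically $V^G$; for $n = 1$ an explicit inverse sends a continuous cocycle $\alpha \colon G \to V$ to the extension with underlying $\W_d(k)$-module $V \oplus \W_d(k)$ and twisted action $g \cdot (v, x) = (gv + x \alpha(g), x)$, and a direct cocycle calculation shows this is a two-sided inverse to $\Phi^1$.

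For the inductive step, my plan is dimension shifting. I embed $V$ into the coinduced module $I := \mathrm{Hom}_{\mathrm{cont}}(G, V)$ (with $G$ acting by translation on the source) and set $W := I/V$. The short exact sequence $0 \to V \to I \to W \to 0$ yields, via the acyclicity of $I$ for continuous cohomology, a connecting isomorphism $\delta \colon H^{n-1}(G, W) \xrightarrow{\sim} H^n(G, V)$. Splicing this sequence at $V$ with any $(n-1)$-extension of $\W_d(k)$ by $W$ yields an $n$-extension of $\W_d(k)$ by $V$; this defines a map $\YExt^{n-1}(\W_d(k), W) \to \YExt^n(\W_d(k), V)$ compatible with $\Phi$ under $\delta$. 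The inductive hypothesis together with a standard diagram chase then yields bijectivity of $\Phi^n$.

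The main obstacle will be that $I$ and $W$ lie outside the finite-type category $\mathcal{M}(\W_d(k), G)$, whereas $\YExt$ is defined within it. The hard point is to verify that every Yoneda class in $\YExt^n_{(\W_d(k), G)}(\W_d(k), V)$ can actually be realized by splicing the canonical $0 \to V \to I \to W \to 0$ with an $(n-1)$-extension of $\W_d(k)$ by $W$ whose middle terms are of finite type over $\W_d(k)$. This boils down to a refinement lemma: given an arbitrary $n$-extension in the ambient category of all continuous $\W_d(k)[G]$-modules, one replaces a non-finite-type middle module by the sub-$\W_d(k)[G]$-module generated by lifts of a finite generating set of the next term, working back up the sequence.
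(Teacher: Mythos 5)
Your construction of the comparison map $\Phi^n$ (splitting an $n$-extension into short exact sequences, composing connecting maps, evaluating at $1\in \W_d(k)^G$) and your treatment of $n=0,1$ are fine. The genuine gap is in the inductive step. The proposition is about $\YExt$ computed in the category $\mathcal{M}(\W_d(k),G)$ of \emph{finite-type} $\W_d(k)$-modules with continuous action, and this category has neither enough injectives nor enough projectives when $G$ is infinite (the candidates $\W_d(k)[G/U]$ are neither). Your dimension shift uses $I=\Hom_{\mathrm{cont}}(G,V)$ and $W=I/V$, which are not finite type, so "$\YExt^{n-1}_{(\W_d(k),G)}(\W_d(k),W)$" is not an object of the framework in which the statement lives, and -- more seriously -- the inductive hypothesis is a statement about finite-type modules only, so you are not entitled to invoke it for $W$. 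Your "refinement lemma" addresses a different point (realizing a finite-type class by a splice through the canonical sequence); it does not repair the illegitimate application of the induction to $W$, and it also does not address injectivity of $\Phi^n$: for that you would need to show that two finite-type extensions which become linked through non-finite-type intermediaries are already linked inside $\mathcal{M}(\W_d(k),G)$. Finally, the truncation procedure "take the sub-$\W_d(k)[G]$-module generated by lifts of a finite generating set" is delicate as stated, because $\W_d(k)$ is not Noetherian when $k$ is imperfect with $[k:k^p]=\infty$ and $d\geq 2$ (already $\Ver(\W(k))\W_2(k)\cong k^{(1)}$ is not finitely generated), so the kernels and intersections needed to keep your shortened sequence exact need not be finite type; and for $d=\infty$ the continuous-cohomology input (long exact sequences, acyclicity of $\Hom_{\mathrm{cont}}(G,V)$ for topological $V$) is an additional verification you have not supplied.

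For contrast, the paper's proof avoids all of these issues by reducing first to the case where $G$ and $d$ are finite: there $H^n(G,V)$ is literally the $n$-th derived functor of $V\mapsto \Hom_{(\W_d(k),G)}(\W_d(k),V)$, the category has enough injectives, and the identification of derived $\Ext$ with Yoneda $\YExt$ is quoted from Verdier; the general case then follows by a limit argument over the finite quotients of $G$ (and over finite $d$). If you want to keep your more hands-on route, the workable variant is: (i) prove that the natural map from $\YExt^n$ in $\mathcal{M}(\W_d(k),G)$ to $\YExt^n$ in the category of \emph{all} discrete (resp. topological) $(\W_d(k),G)$-modules is bijective -- this is where a two-sided refinement/cofinality argument is genuinely needed, for classes and for equivalences -- and then (ii) do your coinduced-module dimension shifting entirely in the big category, where $I$ and $W$ are legitimate objects and coinduced modules are acyclic. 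As written, step (i) is missing and the induction is carried out across two different categories, so the argument does not close.
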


\begin{dem}
	Let us first deal with the case where $G$ and $r$ are  finite.\\
	The group $H^n(G,V)$  is the $n$-th derived functor of the functor $$V \mapsto V^G=\Hom_{(\W_r(k),G)}(\W_r(k),V).$$ Thus, it is nothing but the usual $\Ext$ group $\Ext_{(\W_r(k),G)}^n(\W_r(k),V) $, computed using injective resolutions.  But, for any Abelian category with enough injectives, the derived $\Ext$'s coincide with the Yoneda $\YExt$'s (\cite{Ve}, Ch. III,   Par. 3).\\
	The general case  follows from a  classical limit argument, over the finite quotients of $G$. 
\end{dem}
\begin{lem}\label{cohhom}
	Let $r\in \mathbb{N}^{\ast}\cup \{\infty\}$ and let $A,B$ be  two $(\W_r(k),G)$-modules, $B$ assumed to be free as a $\W_r(k)$-module. Then, for any $n \geq 0$, there is a  canonical isomorphism $$ \YExt_{(\W_r(k),G)}^n(B,A) \stackrel \sim \lra \YExt_{(\W_r(k),G)}^n(\W_r(k),\Hom_{\W_r(k)}(B,A)) .$$
\end{lem}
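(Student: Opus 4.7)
The plan is to construct the isomorphism by an explicit pair of mutually inverse maps, exploiting the tensor--Hom adjunction together with the freeness (hence projectivity and flatness) of $A$ over $\W_d(k)$.

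Forward map. Given a Yoneda $n$-extension
\[
\E : 0 \lra B \lra A_1 \lra \ldots \lra A_n \lra A \lra 0
\]
of $A$ by $B$ in $\M(\W_d(k),G)$, apply the functor $\Hom_{\W_d(k)}(A,-)$ term by term. Since $A$ is free of finite rank over $\W_d(k)$, this functor is exact on $\W_d(k)$-modules, and it is canonically $G$-equivariant when the target carries the diagonal action. We obtain an exact sequence
\[
0 \lra \Hom_{\W_d(k)}(A,B) \lra \Hom_{\W_d(k)}(A,A_1) \lra \ldots \lra \Hom_{\W_d(k)}(A,A_n) \lra \End_{\W_d(k)}(A) \lra 0
\]
of $(\W_d(k),G)$-modules. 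I then pull this back along the unit map $\W_d(k) \lra \End_{\W_d(k)}(A)$, $1 \mapsto \Id_A$, which is $G$-equivariant, to produce an $n$-extension of $\W_d(k)$ by $\Hom_{\W_d(k)}(A,B)$. This construction is clearly compatible with morphisms of $n$-extensions and with Baer sum, hence descends to a homomorphism of abelian groups
\[
\Phi : \YExt_{(\W_d(k),G)}^n(A,B) \lra \YExt_{(\W_d(k),G)}^n(\W_d(k),\Hom_{\W_d(k)}(A,B)).
\]

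Inverse map. Given an $n$-extension
\[
\F : 0 \lra \Hom_{\W_d(k)}(A,B) \lra C_1 \lra \ldots \lra C_n \lra \W_d(k) \lra 0,
\]
I apply $- \otimes_{\W_d(k)} A$ term by term, which is exact since $A$ is flat over $\W_d(k)$, and then push forward along the evaluation map
\[
\mathrm{ev} : \Hom_{\W_d(k)}(A,B) \otimes_{\W_d(k)} A \lra B, \qquad f \otimes a \longmapsto f(a),
\]
which is $G$-equivariant. Using $C_n \otimes_{\W_d(k)} \W_d(k) \simeq C_n$ and identifying the last nonzero term as $A$, this yields an $n$-extension of $A$ by $B$, defining a homomorphism $\Psi$ in the opposite direction.

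Verification. In degree $n=0$, both $\Phi$ and $\Psi$ are the usual tensor--Hom adjunction isomorphism between $\Hom_{(\W_d(k),G)}(A,B)$ and $\Hom_{(\W_d(k),G)}(\W_d(k),\Hom_{\W_d(k)}(A,B)) = \Hom_{\W_d(k)}(A,B)^G$. For general $n$, the fact that $\Psi \circ \Phi$ and $\Phi \circ \Psi$ are the identity on equivalence classes of Yoneda extensions follows from the triangular identities for the adjunction $(-\otimes_{\W_d(k)} A, \Hom_{\W_d(k)}(A,-))$ together with the reflexivity of the finitely generated free module $A$: the composition ``$\Hom_{\W_d(k)}(A,-)$ followed by $-\otimes_{\W_d(k)} A$ and evaluation'' is naturally isomorphic to the identity functor on $\M(\W_d(k),G)$, and symmetrically for the other composition, since the unit of the adjunction $\W_d(k) \to \End_{\W_d(k)}(A)$ and the counit $\mathrm{ev}$ satisfy the usual zig-zag relations.

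The main obstacle is mostly bookkeeping: one has to check that all maps produced are genuine morphisms of $n$-extensions (in particular, that they fix the prescribed endpoints after pullback/pushforward) and that $\Phi$ and $\Psi$ respect the linking equivalence relation. Both points reduce to standard naturality of the adjunction, so the proof is essentially formal; no new input beyond the freeness of $A$ is required.
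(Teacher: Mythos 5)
Your forward map is the same as the paper's: since $A$ is finite free, $\Hom_{\W_d(k)}(A,-)$ is exactly the paper's functor $A^{\vee}\otimes(-)$, and your pullback along the unit $1\mapsto \Id_A$ is their pullback along $\lambda\mapsto\lambda\Id$. Where you go beyond the paper is in exhibiting an explicit inverse ($-\otimes_{\W_d(k)}A$ followed by pushforward along $\ev$); the paper simply asserts that the composite map is "the desired isomorphism" without verification, so your extra step is welcome. One inaccuracy in your verification paragraph should be repaired: the composite "$\Hom_{\W_d(k)}(A,-)$ then $-\otimes_{\W_d(k)}A$ and evaluation" is \emph{not} naturally isomorphic to the identity functor once $\mathrm{rank}(A)=r>1$ (one has $\Hom_{\W_d(k)}(A,M)\otimes_{\W_d(k)}A\simeq M^{r^2}$, and $\ev$ is only a split natural epimorphism). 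What you actually need, and what the zig-zag identities do give, is weaker and suffices: evaluation on each term defines a morphism of $n$-extensions from $\bigl(u^{*}\Hom_{\W_d(k)}(A,\E)\bigr)\otimes_{\W_d(k)}A$ to $\E$ which is the canonical identification $\W_d(k)\otimes A\simeq A$ on the quotient end (this is the triangular identity) and is $\ev$ on the sub end; by the standard characterization of pushforward, this shows $\Psi(\Phi(\E))$ is linked to $\E$, and the dual argument with the unit handles $\Phi\circ\Psi$. With that rephrasing your proof is complete and correct.
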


\begin{dem}
	Considering the Pontryagin dual $B^{\vee}=\Hom_{\W_r(k)}(B,\W_r(k))$, we have a canonical isomorphism $$B^{\vee} \otimes A \stackrel \sim \lra \Hom_{\W_r(k)}(B,A).$$
	The exact functor $ B^{\vee} \otimes .$ yields a functor $$T: \YEExt_{(\W_r(k),G)}^n(B,A) \lra \YEExt_{(\W_r(k),G)}^n(B^{\vee} \otimes B,B^{\vee} \otimes  A)$$ which maps a Yoneda $n$-extension $$\E: 0 \lra A \lra A_1 \lra \ldots \lra A_{n}\lra B \lra 0$$ to the Yoneda $n$-extension $$ B^{\vee} \otimes\E: 0 \lra  B^{\vee} \otimes A \lra  B^{\vee} \otimes A_1 \lra \ldots \lra  B^{\vee} \otimes A_{n}\lra  B^{\vee} \otimes B \lra 0.$$ But the $G$-equivariant  map $$I: \W_r(k) \lra B^{\vee} \otimes B =\End_{\W_r(k)}(B),$$ $$ \lambda \mapsto \lambda \Id$$ gives a pullback functor $$\scalemath{0.95}{I^*: \YEExt_{(\W_r(k),G)}^n(B^{\vee} \otimes B,B^{\vee} \otimes  A) \lra \YEExt_{(\W_r(k),G)}^n(\W_r(k), \Hom_{\W_r(k)}(B,A)),}$$
	
	and the composite $$ I^* \circ T:   \YEExt_{(\W_r(k),G)}^n(B,A) \lra \YEExt_{(\W_r(k),G)}^n(\W_r(k), \Hom_{\W_r(k)}(B,A))$$ gives, by passing to isomorphism classes of objects, a group  homomorphism $$\Phi: \YExt_{(\W_r(k),G)}^n(B,A) \lra \YExt_{(\W_r(k),G)}^n(\W_r(k), \Hom_{\W_r(k)}(B,A)),$$ which is the desired isomorphism. Its inverse can be constructed in a similar fashion, as follows. Given a Yoneda $n$-extension of $(\W_r(k),G)$-modules $$\mathcal F: 0 \lra B^\vee \otimes A \lra F_1 \lra \ldots \lra F_{n}\lra \W_r(k) \lra 0,$$ form the tensor product $$\mathcal F \otimes B: 0 \lra B^\vee \otimes B  \otimes A \lra F_1 \otimes B \lra \ldots \lra F_{n} \otimes B\lra B \lra 0.$$ Applying pushforward w.r.t. the trace  $$  B^\vee \otimes B  \otimes A  \lra A, $$ $$ \phi \otimes b \otimes a \mapsto \phi(b) a,$$ we get  an $n$-extension $$\mathcal E: 0 \lra A \lra A_1 \lra \ldots \lra A_n \lra B \lra 0.$$ Passing to isomorphism classes of extensions, we get an arrow $$\Psi:  \YExt_{(\W_r(k),G)}^n(\W_r(k), \Hom_{\W_r(k)}(B,A)) \lra \YExt_{(\W_r(k),G)}^n(B,A).$$ Checking that $\Phi$ and $\Psi$ are mutual inverses is left as an exercise for the reader- in the spirit of Morita equivalence.
\end{dem}
\section{Lifting and stable lifting}\label{SectionLift}

The purpose of this section is to give a precise meaning to "lifting representations". Here $G$ is a profinite group, and $k$ is any field of characteristic $p$.

\begin{defi}(Lifting, stable lifting).\\ \label{stablelift}
Let $1 \leq r \leq s$ be integers. \\Let $V_r$ be a $(\W_r(k),G)$-module, free as a $\W_r(k)$-module. We say that $V$ lifts to $p^s$-torsion coefficients, if there exists a $(\W_s(k),G)$-module $V_s$, free as a $\W_s(k)$-module, such that the $(\W_r(k),G)$-modules $V_r$ and $V_s \otimes_{\W_s(k)} \W_r(k)$ are isomorphic.\\
We say that $V_r$ stably lifts to $p^s$-torsion coefficients, if there exists an open subgroup $G_0 \subset G,$ of prime-to-$p$ index, such that, as a $(\W_r(k),G_0)$-module, $V_r$ lifts to $p^s$-torsion coefficients.

\end{defi}

The terminology "stable" is motivated by the following Lemma.
\begin{lem}
Let $1 \leq r \leq s$ be integers.
Let $V_r$ be a $(\W_r(k),G)$-module, free as a $\W_r(k)$-module. Assume that $V_r$ stably lifts to $p^s$-torsion coefficients. Then, there exists a $(\W_r(k),G)$-module $W_r$,   free as a $\W_r(k)$-module, such that $V_r \oplus W_r$ lifts to $p^s$-torsion coefficients.

\end{lem}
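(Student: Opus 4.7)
The strategy is to induce the given lift of $V|_{G_0}$ up to $G$, then peel off $V$ as a direct summand via the classical averaging argument. The key enabling fact is that the prime-to-$p$ hypothesis makes the index $[G:G_0]$ invertible in the coefficient ring $\W_r(k)$ (since $\W_r(k)$ is a $\Z/p^r\Z$-algebra).

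Unpacking the definition of stable lifting, fix an open subgroup $G_0 \subset G$ with $n := [G:G_0]$ prime to $p$, together with a free $(\W_d(k),G_0)$-module $\widetilde{V}_0$ such that $\widetilde{V}_0 \otimes_{\W_d(k)} \W_r(k) \simeq V|_{G_0}$ as $(\W_r(k),G_0)$-modules. Set
\[
\widetilde{M} := \Ind_{G_0}^{G} \widetilde{V}_0, \qquad M := \Ind_{G_0}^{G}(V|_{G_0}).
\]
Because $G_0$ has finite index, induction is given by $N \mapsto \W_d(k)[G] \otimes_{\W_d(k)[G_0]} N \simeq N^{n}$ as a $\W_d(k)$-module, so it preserves finite-type freeness and commutes with the reduction $- \otimes_{\W_d(k)} \W_r(k)$. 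Hence $\widetilde{M}$ is a free $(\W_d(k),G)$-module with $\widetilde{M} \otimes_{\W_d(k)} \W_r(k) \simeq M$.

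The main step is to exhibit $V$ as a $(\W_r(k),G)$-direct summand of $M$. Choose coset representatives $g_1,\dots,g_n$ for $G/G_0$ and define
\[
\eta : V \longrightarrow M, \qquad v \longmapsto \sum_{i=1}^{n} g_i \otimes g_i^{-1} v.
\]
A routine verification, using that $V$ is already a $G$-module, shows that $\eta$ is independent of the choice of representatives and is $G$-equivariant. The counit $\varepsilon : M \to V$, $g \otimes v \mapsto g v$, then satisfies $\varepsilon \circ \eta = n \cdot \mathrm{id}_V$. Since $n$ is a unit in $\W_r(k)$, the map $n^{-1}\varepsilon$ is a $(\W_r(k),G)$-linear retraction of $\eta$. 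Setting $W := \ker(n^{-1}\varepsilon)$ gives $M \simeq V \oplus W$ as $(\W_r(k),G)$-modules; the summand $W$ is free over the local ring $\W_r(k)$ as a finitely generated projective module. Consequently
\[
V \oplus W \;\simeq\; M \;\simeq\; \widetilde{M} \otimes_{\W_d(k)} \W_r(k),
\]
so $V \oplus W$ lifts to $p^d$-torsion via $\widetilde{M}$.

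There is no genuine obstacle here: the argument is a standard restriction-induction averaging, and the only substantive verification is that induction from an open subgroup of finite index preserves both freeness over the coefficient ring and reduction modulo $\Ver^r$, which is immediate from the explicit description of $\Ind_{G_0}^G$.
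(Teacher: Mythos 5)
Your proof is correct and follows essentially the same route as the paper: induce the given lift from $G_0$ to $G$, and split $V$ off the induced module $\Ind_{G_0}^G(V|_{G_0})$ using the diagonal/norm (averaging) maps, whose composite is multiplication by the prime-to-$p$ index, a unit in $\W_r(k)$. The only cosmetic difference is that the paper phrases the induced module as $V^{G/G_0}$ and takes $W=\Ker(N)$, while you work with the unit/counit of the induction adjunction; the content is identical.
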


\begin{dem}
Let $G_0 \subset G$ be an open subgroup, of prime-to-$p$
 index, such that the $(\W_r(k),G_0)$-module $V_r$ lifts to $p^s$-torsion coefficients. Let $V_s$ be a $(\W_s(k),G_0)$-module,  free as a $\W_s(k)$-module, lifting $V_r$.\\ Denote by $V_r^{(G/G_0)}$ the product of copies of  $V_r$, indexed by the finite set $G/G_0$.  It is a $(\W_r(k),G)$-module in a natural way, canonically isomorphic to $\mathrm{Ind}_{G_0}^G(\mathrm{Res}_{G_0}^G(V_r))$.\\
Consider the morphisms of $(\W_r(k),G)$-modules $$V_r\stackrel{i}\lra V_r^{(G/G_0)} \stackrel{N}\lra V_r,$$ where $i$ is the diagonal embedding, and $N$ is the norm  $$ \begin{array}{lcll}
N :& V_r^{(G/G_0)} &\longrightarrow &V_r\\
        & (v_c)_{c\in G/G_0} &\longmapsto&\sum_{c\in G/G_0}v_c.
\end{array}$$ 
The composite $N\circ i$ is  multiplication by the index of $G_0$ in $G$, which is prime to $p$. The $(\W_r(k),G)$-module $V_r$ is thus a direct summand of $V_r^{(G/G_0)}$, with complement $W_r:=\Ker(N)$. But the $(\W_r(k),G)$-module module $V_r^{(G/G_0)}$  admits the induced module $$\mathrm{Ind}_{G_0}^G{V_s}=V_s \otimes_{\W_{s}(k)[G_0]}\W_{s}(k)[G]$$ as a lift to $p^{s}$-torsion coefficients. The claim follows.\end{dem}

\begin{rem}
 In the previous Lemma, once $G_0$ is known, $W_r$ is pretty much explicit.
\end{rem}

Lifting from mod $p^r$ to mod $p^{r+1}$ is an "abelian" question: there is no difference between lifting and stable lifting, as illustrated by the following Lemma.
\begin{lem}\label{ablift}
Let $1 \leq r$ be an integer. Let $V_r$ be a $(\W_r(k),G)$-module, free as a $\W_r(k)$-module. Assume that there is another $(\W_{r}(k),G)$-module $W_r$, such that $V_r\oplus W_r$ lifts to $p^{r+1}$-torsion coefficients.\\Then, $V_r$ itself lifts to $p^{r+1}$-torsion coefficients.
\end{lem}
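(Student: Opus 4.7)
The plan is to reformulate the lifting problem as the vanishing of an obstruction class in $H^2(G, \End_k(V_1))$ and then invoke naturality under a $G$-equivariant projection $\End_k(V_1 \oplus W_1) \lra \End_k(V_1)$. Write $V_1 := V_r / p V_r$ and $W_1 := W_r / p W_r$ for the mod-$p$ reductions, viewed with their induced $(k, G)$-module structures, and endow $\End_k$ of these $k$-spaces with the conjugation action of $G$.

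The first step is to set up the obstruction. For any free $(\W_r(k), G)$-module $A_r$ with reduction $A_1$, I would fix an abstract free $\W_{r+1}(k)$-module $\tilde{A}$ lifting $A_r$ as a $\W_{r+1}(k)$-module, and a continuous set-theoretic section $s : G \lra \GL_{\W_{r+1}(k)}(\tilde{A})$ of the reduction map lying above the given $G$-action on $A_r$. The $2$-cochain $(g, h) \longmapsto s(g) s(h) s(gh)^{-1}$ takes values in the kernel $1 + p^r \End_{\W_{r+1}(k)}(\tilde{A})$, canonically isomorphic as a $G$-module to $\End_k(A_1)$, and is a continuous $2$-cocycle. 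Its cohomology class $o(A_r) \in H^2(G, \End_k(A_1))$ is independent of $\tilde{A}$ and $s$, and vanishes if and only if $A_r$ lifts to $p^{r+1}$-torsion in the sense of Definition 3.1. Via Proposition \ref{extcoh} and Lemma \ref{cohhom}, this target can be rephrased as $\YExt^2_{(k, G)}(A_1, A_1)$, though this identification is not needed here.

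Next, I would apply this to $A_r := V_r \oplus W_r$, choosing $\tilde{A} := \tilde{V} \oplus \tilde{W}$ and a block-diagonal section $s := s_V \oplus s_W$, with $s_V, s_W$ set-theoretic sections for $V_r, W_r$. The resulting $2$-cocycle is then block-diagonal, with diagonal entries the obstruction cocycles for $V_r$ and $W_r$. Let $\iota_V : V_1 \hookrightarrow V_1 \oplus W_1$ and $\pi_V : V_1 \oplus W_1 \twoheadrightarrow V_1$ denote the canonical inclusion and projection, and consider the $G$-equivariant $k$-linear map
$$\Phi_V : \End_k(V_1 \oplus W_1) \lra \End_k(V_1), \quad f \longmapsto \pi_V \circ f \circ \iota_V,$$
whose $G$-equivariance uses precisely that the action of $G$ on $V_r \oplus W_r$ is block-diagonal. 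By construction $\Phi_V$ sends the block-diagonal cocycle above to the cocycle attached to $V_r$, so that $(\Phi_V)_* \, o(V_r \oplus W_r) = o(V_r)$ in $H^2(G, \End_k(V_1))$. Since $V_r \oplus W_r$ lifts by hypothesis, $o(V_r \oplus W_r) = 0$; hence $o(V_r) = 0$ and $V_r$ itself lifts.

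The main obstacle is not the functoriality step, which is essentially tautological for block-diagonal cocycles, but the preliminary identification of the lifting problem with the cohomological obstruction. One must verify that the vanishing of $o(A_r)$ exactly controls the notion of lift from Definition 3.1 (a free $\W_{r+1}(k)$-module whose mod-$p^r$ reduction matches $A_r$ as a $(\W_r(k), G)$-module), and handle continuity of $s$ in accordance with the topology on $(\W_{r+1}(k), G)$-modules prescribed in Section 2.
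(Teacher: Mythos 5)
Your proof is correct, but it takes a genuinely different route from the paper's, which explicitly advertises a constructive argument "avoiding the use of abstract cohomological obstructions." There, given a free $(\W_{r+1}(k),G)$-lift $Z_{r+1}$ of $V_r\oplus W_r$, one writes the exact sequence $0\to V_1^{(r)}\oplus W_1^{(r)}=\Ver^r(\W_{r+1}(k))Z_{r+1}\to Z_{r+1}\to V_r\oplus W_r\to 0$ and simply pulls it back along the inclusion $V_r\to V_r\oplus W_r$ and pushes it forward along the projection onto $V_1^{(r)}$; the middle term of the resulting extension is the desired lift of $V_r$. Your argument is the $H^2$-level shadow of the same idea: you encode liftability through the square-zero extension $\W_{r+1}(k)\to\W_r(k)$ as the vanishing of an obstruction class and transport it via the $G$-equivariant block projection $\Phi_V$, whose equivariance rests on exactly the same summand inclusion/projection maps that the paper uses at the level of Yoneda extensions. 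The paper's version buys brevity and explicitness (the lift is produced directly, with no need to set up obstruction theory or its continuity/freeness dictionary); yours buys familiarity and flexibility (it is the standard deformation-theoretic mechanism and adapts to other square-zero situations), at the cost of the preliminary verification that the vanishing of $o(A_r)$ is exactly equivalent to the existence of a free lift in the sense of Definition 3.1. One small correction is needed over non-perfect $k$: the kernel of $\W_{r+1}(k)\to\W_r(k)$ is $\Ver^r(\W_{r+1}(k))$, not $p^r\W_{r+1}(k)$, so the kernel of $\GL_{\W_{r+1}(k)}(\tilde A)\to\GL_{\W_r(k)}(A_r)$ is $1+\Ver^r(\W_{r+1}(k))\End_{\W_{r+1}(k)}(\tilde A)$, canonically $\Hom_k(A_1,A_1^{(r)})$ (a Frobenius twist of $\End_k(A_1)$, as in the paper's notation) rather than literally $\End_k(A_1)$; this twist changes nothing in your block-diagonal functoriality argument.
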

\begin{proof}
We give a constructive proof, avoiding the use of cohomological obstructions.\\
Denote by $V_1$ (resp. $W_1$) the reduction of $V_r$ (resp. $W_r$) to a $(k,G)$-module. If $M$ is a $\W(k)$-module, denote by $$M^{(i)}:=M \otimes_{\Frob^i} \W(k)$$ its $i$-th Frobenius twist.
By assumption, there is a free $(\W_{r+1}(k),G)$-module $Z_{r+1}$ and a short exact sequence  of $(\W_{r+1}(k),G)$-modules $$\mathcal{E}:0\lra V_1^{(r)}\oplus W_1^{(r)} = \Ver^r(\W_{r+1}(k)) Z_{r+1} \lra {Z_{r+1}} \lra   V_r\oplus W_r  \lra 0.$$
Denote by $i:V_r\lra V_r\oplus W_r$ and $\pi:W_1^{(r)}\oplus V_1^{(r)}\lra V_1^{(r)}$ the natural inclusion and projection. Form the extension of $(\W_{r+1}(k),G)$-modules $$\pi^{\ast}i_{\ast}(\mathcal{E}):0\lra V_1^{(r)}\lra V_{r+1} \lra V_r  \lra 0,$$ which serves as the definition of $ V_{r+1}$. Recall that the extensions $\pi^{\ast}i_{\ast}(\mathcal{E})$ and $i_{\ast}\pi^{\ast}(\mathcal{E})$ are canonically isomorphic, so that this construction does not depend on the order in which the pullback and the pushforward are applied. We claim that $ V_{r+1}$
is a lift of $V_r$ to $p^{r+1}$-torsion coefficients. To see why, it suffices to justify that $ V_{r+1}$ is free, as a $\W_{r+1}(k)$-module. We may thus dismiss the action of $G$. Picking bases, we then get that $V_r$ (resp. $W_r$, $Z_{r+1}$) is isomorphic to $\W_{r}(k)^m$ (resp.  $\W_{r}(k)^n$,  $\W_{r+1}(k)^m \oplus  \W_{r+1}(k)^n$), and that $\mathcal E$ is isomorphic to  $$0\lra k^m \oplus k^n  \lra \W_{r+1}(k)^m \oplus  \W_{r+1}(k)^n \lra   \W_{r}(k)^m \oplus  \W_{r}(k)^n \lra 0,$$ which is the direct sum of the extensions $$\mathcal F_i:=0\lra k^i  \lra \W_{r+1}(k)^i \lra   \W_{r}(k)^i \lra 0,$$ for $i=m,n$. Applying $i_{\ast}\pi^{\ast}$ to $\mathcal E$ yields  $\mathcal F_m$ as a result. The claim is proved. \end{proof}
\section{Cyclotomic modules and cyclotomic profinite groups}\label{SectionCyclo}

From now on, we fix a field $k$ of characteristic $p$ and a profinite group $G$.

In this section, we recall the notion of cyclotomic pair from \cite{DCF1}, and provide important examples.

\begin{notation}
Given two positive integers $s\leq r$ in $\mathbb{N}^{\ast}\cup \{\infty\}$ and a $(\W_r(k),G)$-module $M_r$, we put $$M_s:=M_r\otimes_{\W_r(k)}\W_s(k).$$
\end{notation} 

\begin{defi}\label{definsurj}
Let $r\in \mathbb{N}^{\ast}\cup \{\infty\}$ and $n \geq 1$ be an integer. Let $$f: M_r \lra N_r$$ be a  morphism of $(\W_r(k),G)$-modules. We say that $f$ is $n$-surjective if, for every open subgroup $H \subset G$,  the map $$f_*:H^n(H,M_r) \lra H^n(H,N_r)$$ is surjective.
\end{defi}

\begin{defi}\label{deficyclopair}
Let $n\geq 1$ be an integer and $e\in \mathbb{N}^{\ast}\cup \{\infty\}$. Let $\mathcal{T}_{e+1}$ be a  $(\W_{e+1}(k),G)$-module, free of rank $1$ as a $\W_{e+1}(k)$-module. \\Assume that the quotient $$ \mathcal{T}_{e+1}^{ \otimes^n_{\W_{e+1}(k)}} \rightarrow  \mathcal{T}_{1}^{ \otimes^n_{k}}$$ is $n$-surjective.\\
We then say that the pair $(G, \mathcal{T})$ is $(n,e)$-cyclotomic, and that the profinite group $G$ is $(n,e)$-cyclotomic (relatively to $k$).
\end{defi}

Let $\mathcal{T}_{e+1}$ be a $(n,e)$-cyclotomic module. For $i$ a non negative integer,  we  put $$\W_{e+1}(k)(i):=\mathcal{T}_{e+1}^{ \otimes^i_{\W_{e+1}(k)}}$$ and $$\W_{e+1}(k)(-i):=\W_{e+1}(k)(i) ^\vee.$$  For a $\W_{e+1}(k)$-module $M$, we put $$M(i)=  M\otimes_{\W_{e+1}(k)} \W_{e+1}(k)(i).$$

A cyclotomic module of depth $e$ is given by a continuous character $$\chi: G \lra \W_{e+1}(k)^\times,$$ 
and provides an analogue of the cyclotomic character in Galois theory. This allows to freely to freely mimic Kummer theory, in the framework of cyclotomic pairs. 


\begin{rem}
Let $(G, \W_{e+1}(k)(1))$ be a cyclotomic pair. If $H \subset G$ is a closed subgroup, then $(H, \W_{e+1}(k)(1))$ is a cyclotomic pair as well. This follows, by a standard limit argument, from the (obvious) case where $H \subset G$ is open.\\

\end{rem}
\begin{rem}\label{NotCyclo}
For $p$ odd, there is no non-trivial finite $(1,1)$-cyclotomic $p$-group. For $p=2$, the  only  non-trivial $(1,1)$-cyclotomic finite $2$-group  is $\Z/2\Z$. For each $e\in \mathbb{N}^{\ast}\cup \{\infty\}$, it fits into the unique $(1,e)$-cyclotomic  pair  $(\Z/ 2 \Z, \W_{e+1}(k)(1))$, where the non-trivial element of $\Z/2 \Z$ acts by multiplication by $-1$. This result is a variation around Emil Artin's theorem: the only non-trivial finite group that  occurs as an absolute Galois group is $\Z/ 2 \Z$.  See \cite{DCF}, Exercise 14.27, or \cite{QW}, Proposition 6.1.
\end{rem}
We now provide a supply of cyclotomic pairs $(G,\mathcal T)$ arising from geometry, i.e. where $G$ is the algebraic fundamental group of a scheme. Note that, by \cite[Proposition 15]{Se3}, it is known that every finite group occurs as the algebraic fundamental group of a smooth complex variety. Using Remark \ref{NotCyclo}, we see that not every algebraic fundamental group is cyclotomic profinite.

\begin{lem}\label{pdimcoh} Let $n \geq 1$ be an integer. Assume that the profinite group $G$ is of cohomological $p$-dimension at most $1$.  Let $\W_{e+1}(k)(1)$ be a $(\W_{e+1}(k),G)$-module, free  of rank $1$ as a $\W_{e+1}(k)$-module. Then, the pair $(G, \W_{e+1}(k)(1))$ is $(n,e)$-cyclotomic.
\end{lem}

\begin{dem}
Let $c_1 \in H^n(G, \W_{1}(k)(n))$ be a cohomology class. Using the exact sequence \[ 0 \lra \W_{e}(k)(n) \lra \W_{e+1}(k)(n) \lra \W_{1}(k)(n) \lra 0,\] we see that the obstruction to lifting $c_1$ to a class $c_{e+1} \in H^n(G, \W_{e+1}(k)(n))$ lies in  $H^{n+1}(G, \W_{e}(k)(n))$. This group vanishes since $G$ is  of cohomological $p$-dimension at most $1$. The claim is proved.
\end{dem}

\begin{prop}\label{toFp}
Assume that $(G, \W_{e+1}(k)(1))$ is an $(n,e)$-cyclotomic pair.\\ Then for any surjection $\pi:M\lra N$ of $\W_{e+1}(k)$-modules (with trivial $G$-action), the induced morphism $$\pi(n):M(n)\lra N(n)$$ is $n$-surjective.
\end{prop}

\begin{proof}
By a limit argument, we can assume that $e$ is finite. We then proceed by induction, on the lowest integer $m \geq 1$ such that $N$ is a $\W_m(k)$-module. If $m=1$, then $N$ is a $k$-vector space. Pick a $k$-basis $\mathcal B$ for $N$. Consider the natural surjection \[F: \W_{e+1}(k)^{(\mathcal B)} \lra k^{(\mathcal B)} \simeq N .\] There exists a $\W_{e+1}(k)$-linear map $ \rho: \W_{e+1}(k)^{(\mathcal B)} \lra M,$ such that $\pi \circ \rho=F. $ Since $F(n)$ is $n$-surjective by definition of a cyclotomic module (combined to the fact that taking cohomology commutes with direct sums), we indeed conclude that $\pi(n)$ is $n$-surjective as well.\\
In general, denote by $\mathcal M:= \Ver(\W_{e+1}(k)) $ the maximal ideal of $\W_{e+1}(k)$. Consider the composite \[ M \stackrel \pi \lra N  \stackrel q \lra N/ \mathcal M N,\] where $q$ is the natural quotient. By what precedes, $(q \circ \pi)(n)$ is $n$-surjective. It suffices to prove that $\pi'(n)$ is $n$-surjective, where  \[\pi': \mathcal M M \lra \mathcal M N \] denotes the map induced by $\pi$, by a diagram chase over 
$$\xymatrix{
    0\ar[r]&\mathcal M M\ar[d]^-{\pi'}\ar[r]& M\ar[d]^-{\pi}\ar[r] & M/\mathcal M M\ar[d]\ar[r] & 0 \\
    0\ar[r]&\mathcal M N\ar[r] & N\ar[r]^-q & N/\mathcal M N\ar[r] & 0 }$$
But as $\mathcal M N$ is a $\W_{m-1}(k)$-module, induction applies.
\end{proof}

\begin{coro}\label{extscal}
Let $l/k$ be a field extension. Let $n\geq 0$ be an integer, and let $e\in\mathbb{N}^{\ast}\cup\{\infty\}$. Let $(G, \W_{e+1}(k)(1))$ be an $(n,e)$-cyclotomic pair. Set $$ \W_{e+1}(l)(1):=\W_{e+1}(k)(1)\otimes_{\W_{e+1}(k)}\W_{e+1}(l).$$  Then, the pair $(G, \W_{e+1}(l)(1))$ is  $(n,e)$-cyclotomic, relatively to $l$. \\
In short: cyclotomic pairs are preserved under field extensions of $k$.
\end{coro}

\begin{proof}
The $(\W_{e+1}(l),G)$-module $\W_{e+1}(l)(1)$ is free of rank $1$. As a morphism of $\W_{e+1}(k)$-modules, the map $\W_{e+1}(l)\lra l$ is surjective. It remains to apply Proposition \ref{toFp}.
\end{proof}

Hilbert 90 theorem implies that the absolute Galois group $G$ of a field $F$ of characteristic $\neq p$, together with its Tate module $\Z_p(1)$, form a  $(1,\infty)$-cyclotomic pair. This elementary fact was discussed in \cite[Proposition 14.19]{DCF}, which also includes other examples of  (not necessarily absolute) Galois groups.\\
We now provide more geometric examples of cyclotomic pairs.

\begin{prop}\label{semiloccyclotomic}
Let $A$ be a semilocal $\Z[\frac 1 p]$-algebra. Denote by $G$ the \'etale fundamental group of $S:=\Spec(A)$ (at a given geometric point) and by  $\mathbb{Z}_p(1)$ its usual Tate module. Then, the pair $(G, \mathbb{Z}_p(1))$ is $(1,\infty)$-cyclotomic (over $k=\F_p$).
\end{prop}

\begin{proof}
May assume that  the semilocal ring $A$ is connected. We work on the small \'etale site over $S$. \\

An open subgroup of $G$ corresponds to the fundamental group $G_U$ of a finite \'etale cover $U\lra S$. Consider for $s\geq 1$ the diagram of \'etale sheaves
$$
\xymatrix{
0\ar[r]&\mu_{p,U}\ar[r]\ar@{=}[d]& \mu_{p^{s+1},U}\ar[r]^{\phi}\ar[d]&\mu_{p^s,U}\ar[r]\ar[d]&0\\
0\ar[r]&\mu_{p,U}\ar[r]& \mathbb{G}_{m,U}\ar[r]^{\phi'}&\mathbb{G}_{m,U}\ar[r]& 0
}
$$    

where $\phi$ and $\phi'$ denote the $p$-power maps. As $U$ is the spectrum of a semilocal ring, its Picard group is trivial by Grothendieck-Hilbert's theorem 90, and $\phi'$ certainly(!) induces a surjection $$H^1_{\acute{e}t}(U,\mathbb{G}_m)\lra H^1_{\acute{e}t}(U,\mathbb{G}_m).$$ A simple diagram chase then implies that $\phi$ also induces a surjection
$$H^1(G_U,\mathbb{Z}/p^{s+1}(1))\simeq H^1_{\acute{e}t}(U,\mu_{p^{s+1},U})\lra H^1_{\acute{e}t}(U,\mu_{p^s,U})\simeq H^1(G_U,\mathbb{Z}/p^{s}(1)).$$

\end{proof}


\begin{prop}[{\cite[Proposition 1.6]{Gi}}]\label{charpcyclotomic}
Let  $A$ be a commutative ring of characteristic $p$. Denote by $G$ the \'etale fundamental group of $S:=\Spec(A)$ (at a given geometric point). Then $G$ is of $p$-cohomological dimension $\leq 1$. Thus Lemma \ref{pdimcoh} applies: for any $n \geq 1$ and for any $\W(k)(1)$, the pair   $(G, \W(k)(1))$ is  $(n,\infty)$-cyclotomic.
\end{prop}

\begin{proof}(sketch; see  \cite[Proposition 1.6]{Gi} for details)\\
As before, we can assume that $k$ is $\mathbb{F}_p$,  and we work in the small \'etale site over $S$.  Consider the Artin-Schreier sequence $$0\lra \mathbb{Z}/p\mathbb{Z}\lra \mathbb{G}_a \stackrel {\Frob-\Id} \lra \mathbb{G}_a \lra 0.$$
By Grothendieck-Hilbert 90 for $\mathbb{G}_a$, combined with the vanishing of coherent cohomology over an affine base, we know that $H^1_{\acute{e}t}(S,\mathbb{G}_a)=H^2_{\acute{e}t}(S,\mathbb{G}_a)=0$. Considering the associated long sequence in \'etale cohomology, we get $H^2_{\acute{e}t}(S,\mathbb{Z}/p\mathbb{Z})=0$. Using Leray's spectral sequence, we conclude  that $H^2(G,\mathbb{Z}/p\mathbb{Z})=0$. Similarly, $H^2(H,\mathbb{Z}/p\mathbb{Z})=0$ for any open subgroup $H$ of $G$. The group $G$ is therefore of cohomological $p$-dimension $\leq 1$, and it remains to apply lemma \ref{pdimcoh}.
\end{proof}

\begin{prop}\label{curvecyclotomic}
Let  $S=C$ be a smooth curve over an algebraically closed field $F$. Denote by $G$ its fundamental group (at a given geometric point). Set $\mathcal T:=\Z_p(1)$, the usual Tate module if $p \neq 0 \in F$, or $\mathcal T:=\Z_p$, the trivial module if $p = 0 \in F$. Then,  the pair   $(G,\mathcal T)$ is  $(1,\infty)$-cyclotomic.
\end{prop}

\begin{proof}
We may assume that $C$ is connected. If $F$ has characteristic $p$, one can adapt the proof of  Proposition \ref{charpcyclotomic}. How to do it is obvious if $C$ is affine. If $C$ is proper, note that one still has $H^2_{\acute{e}t}(C,\mathbb{G}_a)=0$, and that $(\Frob-\Id)$ induces a surjection on the finite-dimensional  $F$-vector space $H^1_{\acute{e}t}(C,\mathbb{G}_a)$ (see \cite[Lemma 0.5]{Bh}). A similar proof then goes through.\\
We thus assume that $F$ has characteristic not  $p$, and work over the small \'etale site over $C$. First assume that $C$ is proper and consider a connected finite \'etale cover $U\lra C$, given by an open subgroup $H$ of $G$. The curve $U$ is then smooth and proper, as well. Write the short exact sequence
$$0\lra \mathrm{Pic}^0(U)\lra \mathrm{Pic}(U)\stackrel{\mathrm{deg}}{\lra }\mathbb{Z}\lra 0.$$
The abelian group $\mathrm{Pic}^0(U)$ consists of the $F$-rational points of the Jacobian $\mathrm{Jac}_F(U)$. It is hence  divisible, since $F$ is algebraically closed.

Note that for any $s\geq 1$, the natural map $H^1_{\acute{e}t}(U,\mu_{p^s,U})\lra H^1_{\acute{e}t}(U,\mathbb{G}_{m,U})$ lands in $\mathrm{Pic}^0(U)$. Considering the same diagram as in the proof of Proposition \ref{semiloccyclotomic}, we see that the image of any class of $H^1_{\acute{e}t}(U,\mu_{p^s,U})$ in $H^1_{\acute{e}t}(U,\mathbb{G}_{m,U})$ lies in the image of the endomorphism of $H^1_{\acute{e}t}(U,\mathbb{G}_{m,U})$ induced by $\phi'$ (which is multiplication by $p$). The map $$H^1(H, \Z/p^{s+1} \Z (1))= H^1_{\acute{e}t}(U,\mu_{p^{s+1},U})\lra H^1_{\acute{e}t}(U,\mu_{p^{s},U})= H^1(H, \Z/p^{s} \Z (1))$$ induced by $\phi$ is thus also surjective. Therefore, the group $G$ is $(1,\infty)$-cyclotomic, with cyclotomic character the Tate module $\mathbb{Z}_p(1)$.\\
We now deal with the case of a non-proper (i.e. affine) smooth connected curve $C$ over $F$. Denote by $\tilde{C}$ the smooth proper curve containing $C$, and by $x$ a closed point in $\tilde{C}\setminus C$. Adjusting by multiples of $[x]\in \mathrm{Pic}(\tilde{C})$, one easily sees that the restriction morphism $\mathrm{Pic}^0(\tilde{C})\lra \mathrm{Pic}(C)$ is surjective, hence that the abelian group $\mathrm{Pic}(C)$ is divisible. The same holds for any \'etale cover of $C$, and we can conclude as before.
\end{proof}
The previous result actually extends to higher dimensional varieties, as follows.
\begin{prop}\label{vastcyclotomic}
 Let $S$ be a smooth projective variety, over an algebraically closed field $F$, of characteristic $\neq p$. Denote by $G$ the fundamental group of $S$, at  a given geometric point.\\
 Assume that, for every finite \'etale cover $U \lra S$, the N\'eron-Severi group of $U$ has no $p$-torsion. Then, the pair $(G,\Z_p(1))$ is $(1, \infty)$-cyclotomic. \\
 More generally, let $e \geq 1$ be an integer. Assume that, for every finite \'etale cover $U \lra S$, the $p$-primary part of the N\'eron-Severi group of $U$ is isomorphic to a product $\prod_{i=1}^n \Z /p^{r_i} \Z,$  with $r_i > e$ for $i=1,\ldots,n$. \\Then, the pair $(G,\Z/p^{1+e}(1))$ is $(1,e)$-cyclotomic.
\end{prop}
\begin{dem}
It suffices to prove the second statement. Consider the exact sequence (of Abelian groups) \[ 0 \lra \mathrm{Pic}^0(U) \lra \mathrm{Pic}(U) \lra \mathrm{NS}(U) \lra 0.\] The group $\mathrm{Pic}^0(U)$ is divisible (as the group of $F$-points of an Abelian variety). From the hypothesis made on $\mathrm{NS}(U) $, we deduce that the map $$\begin{tabular}{ccc}
$\mathrm{Pic}(U)[p^{r+1}]$&$\lra$    &  $\mathrm{Pic}(U)[p]$ \\
     $x$ & $\mapsto$ & $p^r x$
\end{tabular}$$
is onto, using an elementary diagram chase left to the reader. This holds for every finite \'etale cover $U/S$. The rest of the proof is the same as in Proposition \ref{curvecyclotomic}.
\end{dem}

\begin{rem}
Construction of new examples of surfaces satisfying hypothesis of Proposition \ref{vastcyclotomic} is an interesting problem. A good starting point may be  Kodaira fibered surfaces.
\end{rem}

\section{Lifting representations of cyclotomic profinite groups}

The following is a reformulation of Definition \ref{wkgmod}, for free $(\W_r(k),G)$-modules.

\begin{defi}\label{cont}
Let $G$ be a profinite group, $p$ be a prime, $k$ be a field of characteristic $p$ and $r\geq 1$ be an integer. A representation $$G\lra \GL_d(\W_r(k))$$ is continuous if its kernel is open in $G$.\\A continuous representation $$\rho_{\infty}:G\lra \GL_{\infty}(\W(k))$$ is a compatible data, for all $r\in \mathbb{N}^{\ast}$, of  continuous representations $$\rho_r:G\lra \GL_r(\W_d(k)).$$ The compatiblity condition simply means that $\rho_{r+1}$ reduces to $\rho_{r},$  for all $r$.
\end{defi}

We now prove Theorem \ref{main},  providing a sufficient condition for  lifting  continuous representations of cyclotomic profinite groups.

\begin{thm}\label{main}
Let $k$ be a field of characteristic $p$, and $e\in \mathbb{N}^{\ast}\cup \{\infty\}$. Let $G$ be a $(1,e)$-cyclotomic profinite group relatively to $k$, and let $V_1$ be a $(k,G)$-module. \\Assume that there is an open subgroup $G_0\subset G$, of prime-to-$p$ index, two permutation $(k,G_0)$-modules $A$ and $B$, and a short exact sequence of $(k,G_0)$-modules
$$0\lra A \lra V_1 \lra B\lra 0.$$ Then, $V_1$ stably lifts to $p^{e+1}$-torsion coefficients.

Furthermore, $V_1$ itself lifts to $p^2$-torsion coefficients.
\end{thm}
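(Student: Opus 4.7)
The plan is to convert the lifting of the extension $0\to A\to V\to B\to 0$ into the lifting of a single class in $H^{1}$ via Lemma~\ref{cohhom}, and then to apply the $1$-surjectivity of the cyclotomic module $\mathcal{T}$ on a carefully chosen open subgroup. The essential trick is to lift $A$ and $B$ \emph{asymmetrically}: after a mild shrinking of $G_{0}$, the lift of $A$ is twisted by $\mathcal{T}$ whereas $B$ is lifted naively. This forces exactly one copy of $\mathcal{T}$ to appear inside the relevant Hom module, which is what brings $1$-surjectivity into play.

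First I would shrink $G_{0}$. Let $\bar\chi:G\to k^{\times}$ denote the reduction modulo $\Ver$ of the character of $\mathcal{T}$. Since $\bar\chi$ is continuous with finite image and $k^{\times}$ has no $p$-torsion in characteristic $p$, the kernel $\Ker(\bar\chi)$ has prime-to-$p$ index in $G$, and so does $G_{0}':=G_{0}\cap\Ker(\bar\chi)$. By Mackey, $A|_{G_{0}'}$ and $B|_{G_{0}'}$ remain permutation modules and all of their stabilizers lie in $\Ker(\bar\chi)$; in particular $k(1)|_{G_{0}'}=k$ with trivial action.

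Next, let $\hat A$, $\hat B$ denote the standard permutation-module $\W_{d+1}(k)$-lifts of $A|_{G_{0}'}$ and $B|_{G_{0}'}$, and set $\tilde A:=\hat A\otimes_{\W_{d+1}(k)}\mathcal{T}|_{G_{0}'}$ and $\tilde B:=\hat B$. These are free $(\W_{d+1}(k),G_{0}')$-modules whose mod-$p$ reductions are $A|_{G_{0}'}$ and $B|_{G_{0}'}$ (using $\bar\chi|_{G_{0}'}=1$). The identification
$$\Hom_{\W_{d+1}(k)}(\tilde B,\tilde A)\ \cong\ \Hom_{\W_{d+1}(k)}(\hat B,\hat A)\otimes_{\W_{d+1}(k)}\mathcal{T}|_{G_{0}'},$$
combined with Lemma~\ref{cohhom}, a Mackey decomposition $\Hom_{\W_{d+1}(k)}(\hat B,\hat A)=\bigoplus_{i}\W_{d+1}(k)[G_{0}'/H_{i}]$ with each $H_{i}\subset\Ker(\bar\chi)$, and Shapiro's lemma, identifies the reduction-mod-$p$ map
$$\YExt^{1}_{(\W_{d+1}(k),G_{0}')}(\tilde B,\tilde A)\ \longrightarrow\ \YExt^{1}_{(k,G_{0}')}(B,A)$$
with the direct sum of maps $H^{1}(H_{i},\mathcal{T}|_{H_{i}})\to H^{1}(H_{i},k(1)|_{H_{i}})=H^{1}(H_{i},k)$. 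Each summand is surjective by the $1$-surjectivity of $\mathcal{T}|_{H_{i}}$, inherited from the smoothness of $G$. Hence the extension class of $V|_{G_{0}'}$ lifts, producing a free $(\W_{d+1}(k),G_{0}')$-module extending $V|_{G_{0}'}$, i.e.\ the desired stable lift.

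Finally, for the assertion that $V$ itself lifts to $p^{2}$-torsion, I would reduce $\mathcal{T}$ modulo $p^{2}$ to equip $G$ with a $(1,1)$-cyclotomic module, apply the above stable-lifting result with $d=1$, and invoke the averaging lemma preceding Lemma~\ref{ablift} to obtain a lift of $V\oplus W$ on all of $G$; Lemma~\ref{ablift} then extracts an honest lift of $V$. The main obstacle I foresee is recognizing the correct asymmetric lift: with the naive symmetric choice $\tilde A:=\hat A$, the reduction map would become $\bigoplus_{i}H^{1}(H_{i},\W_{d+1}(k))\to\bigoplus_{i}H^{1}(H_{i},k)$ on trivial coefficients, whose Bockstein obstruction is not controlled by the cyclotomic hypothesis and generically does not vanish. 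Twisting $\tilde A$ by $\mathcal{T}$, made compatible with $V$ only after the preliminary shrinking $G_{0}\to G_{0}'\subset\Ker(\bar\chi)$, is precisely what injects a $\mathcal{T}$-factor into the Hom module and thereby allows the cyclotomic hypothesis to do the work.
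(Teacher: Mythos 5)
Your proposal is correct and follows essentially the same route as the paper: shrink $G_0$ to kill the mod-$p$ cyclotomic character, lift $B$ by the naive permutation module and $A$ by the permutation module twisted by $\mathcal{T}$ (the paper's $\mathcal{T}^Y$), and use Lemma \ref{cohhom} plus Shapiro to reduce surjectivity of the mod-$p$ map on $\YExt^1$ to the $1$-surjectivity $H^1(H_i,\mathcal{T})\twoheadrightarrow H^1(H_i,k)$ given by smoothness. The deduction of the $p^2$-lift of $V$ itself via the averaging lemma and Lemma \ref{ablift} also matches the paper's argument.
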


\begin{proof}
We show the first statement of the theorem. The second then follows from Lemma \ref{ablift}. Let $\W_{1+e}(1)$ be a $(1,e)$-cyclotomic module, relatively to $k$ and $G$. We may replace $G_0$ by its intersection with the kernel of the character $G \lra k^\times$ giving the action of $G$ on $\W_{1}(k)(1)$, which has index prime-to-$p$ as well. We can thus assume that $\W_{1}(k)(1)   \simeq k$ has the trivial $G_0$-action. The Yoneda extension $$0\lra A \lra V_1 \lra B\lra 0,$$ corresponds to a cohomology class $$\mathcal{E}\in H^1(G_0,\mathrm{Hom}(B,A)).$$ Fixing two respective $G_0$-bases $Y$ and $X$ of $A$ and $B$, we have a $G_0$-equivariant isomorphism \[\mathrm{Hom}(B,A) \simeq \mathrm{Hom}(k^{X},k^{ Y}) \simeq k^{X\times Y}, \] through which the class $\mathcal{E}$ is given by an element of $H^1(G_0,k^{X\times Y})$. The $G_0$-set $X\times Y$ decomposes as a disjoint union $$X\times Y = \bigsqcup_{i\in \mathcal{I}}G_0/G_i$$
of $G_0$-orbits, where all $G_i$'s are open in $G_0$. Shapiro's lemma yields an isomorphism $$H^1(G_0,k^{X\times Y})\simeq \bigoplus_{i\in \mathcal{I}}H^1(G_i,k)$$
Deciphering the definition of a $(1,e)$-cyclotomic pair and the comparison lemma \ref{extcoh}, we get a surjection $$H^1(G_i,\W_{1+e}(1)) \longrightarrow H^1(G_i,k);$$  that is to say, $$\mathrm{YExt}^1_{(\W_{e+1}(k),G_i)}(\W_{e+1}(k),\W_{1+e}(1))\longrightarrow \mathrm{YExt}^1_{(k,G_i)}(k,k),$$ for all $i \in \mathcal I$. Hence, the natural map $$\mathrm{YExt}^1_{(\W_{e+1}(k),G_0)}(\W_{e+1}(k)^X,\W_{1+e}(1)^Y)\longrightarrow \mathrm{YExt}^1_{(k,G_0)}(k^X,k^Y)$$ is surjective. As a consequence, $V_1$ fits into a commutative diagram of $(\W_{e+1}(k),G_0)$-modules 
\begin{equation} \label{dia1}
\begin{split}
\xymatrix{
    0\ar[r]&\W_{1+e}(1)^Y\ar[r]\ar[d]& V_{e+1}\ar[r]\ar[d]&\W_{e+1}(k)^X\ar[r]\ar[d]&0\\
    0\ar[r]&k^Y\ar[r]&V_1\ar[r]&k^X\ar[r]&0,
  }
\end{split}
\end{equation}
 which serves as the definition of $V_{e+1}$.  Note that the vertical arrows are the natural reductions.\\
The cyclotomic module $\W_{1+e}(k)(1)$ is a free $\W_{e+1}(k)$-module. Hence so is $V_{e+1}$- yielding a stable lifting of $V_1$, to $p^{e+1}$-torsion coefficients.
\end{proof}


\section{Applications to Galois representations and local systems}

In this section we provide applications to Theorem \ref{main} to lifting  Galois representations and local systems.

\begin{thm}\label{t1}Let $k$ be a field of characteristic $p$, $e\in \mathbb{N}^{\ast}\cup\{\infty\}$. Let $G$ be a $(1,e)$-cyclotomic profinite group. Let $$\rho:G\rightarrow \GL_2(k)$$ be a continuous representation. Then, $\rho$ lifts to $p^2$-torsion coefficients.

Furthermore, $\rho$ stably lifts to $p^{e+1}$-torsion coefficients.

If $k=\mathbb{F}_2$, these results also hold for representations of $G$ of  dimension up to $4$.
\end{thm}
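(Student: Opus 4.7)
The plan is to deduce both conclusions from Theorem \ref{main}. Since $\rho$ is continuous with open kernel, its image $\rho(G)$ is a finite subgroup of $\GL_n(k)$; setting $G_0 := \rho^{-1}(P)$ for $P$ a Sylow $p$-subgroup of $\rho(G)$ yields an open subgroup of prime-to-$p$ index, through whose quotient $P$ the action on $V := k^n$ factors. By Theorem \ref{main}, it then suffices to exhibit a short exact sequence $0 \to A \to V \to B \to 0$ of $(k, G_0)$-modules with $A$ and $B$ permutation.

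For the $2$-dimensional case over arbitrary $k$ this is immediate: since $P$ is a finite $p$-group acting in characteristic $p$, we have $V^{G_0} \neq 0$, hence there is a $G_0$-stable subspace $V_1 \subset V$ of dimension $1$. Both $V_1$ and $V/V_1$ are $1$-dimensional $G_0$-modules, acted on via characters $G_0 \to k^\times$; since $G_0$ acts through a $p$-group and $k^\times$ has no nontrivial $p$-torsion, both characters are trivial. So $V_1$ and $V/V_1$ are trivial (hence permutation) $1$-dimensional modules, and Theorem \ref{main} applies.

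For the case $k = \F_2$ and $\dim V \leq 4$, the key intermediate step is the lemma (which I would establish first): \emph{every $2$-dimensional $\F_2$-representation $W$ of a finite $2$-group is a permutation module}. To see this, both composition factors of $W$ are trivial (the only simple $\F_2$-representation of a $2$-group), so in a suitable basis the action is by unipotent upper-triangular $2 \times 2$ matrices, giving an additive character $\chi : G_0 \to \F_2 = \Z/2\Z$. If $\chi$ is trivial then $W \cong \F_2 \oplus \F_2$; otherwise, setting $H := \ker \chi$ and fixing $g \notin H$, the identification $v_1 \leftrightarrow e_H + e_{gH}$, $v_2 \leftrightarrow e_H$ (with $v_1 \in W^{G_0}$ and $v_2$ any vector lifting a generator of $W/W^{G_0}$) yields an isomorphism $W \cong \F_2[G_0/H]$. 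Granting this, I take a complete $G_0$-stable flag $0 \subset V_1 \subset \cdots \subset V_n = V$ (available since all simple factors are trivial) and cut at $V_2$: for $n = 3$ set $A = V_2$ (permutation by the lemma) and $B = V/V_2$ (trivial $1$-dimensional), and for $n = 4$ set $A = V_2$ and $B = V/V_2$, both $2$-dimensional and permutation by the lemma. Theorem \ref{main} then delivers both parts of the conclusion.

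The main obstacle is really identifying a permutation-module structure: the whole approach hinges on the low-dimensional accident that every $2$-dim $\F_2$-representation of a $2$-group is a permutation module. This breaks already over $\F_4$ (where the additive group of $k$ is $(\Z/2)^2$, producing $2$-dim representations that are not permutation) and in higher dimensions over $\F_2$ (where indecomposable non-permutation modules appear, for instance $\Z/4\Z$ acting on $\F_2^3$ via a single $3 \times 3$ unipotent Jordan block); this is what forces the restrictions in the statement.
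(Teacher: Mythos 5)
Your proposal is correct and follows essentially the same route as the paper: pass to an open subgroup of prime-to-$p$ index acting through a $p$-group, exhibit a short exact sequence with permutation sub and quotient, and invoke Theorem \ref{main}. The only difference is presentational — you take the preimage of a Sylow $p$-subgroup of the finite image rather than a pro-$p$-Sylow stabilizer, and you spell out the lemma the paper leaves implicit, namely that every $2$-dimensional $\F_2$-representation of a $2$-group is a permutation module.
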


\begin{proof} Let $V_1$ be a $2$-dimensional $(k,G)$-module. There is a line $L_1 \subset V_1$ fixed by a pro-$p$-Sylow $G_p$ of $G$ \cite[8.3]{Se2}. The stabilizer $H=\mathrm{Stab}_{G_p}(L_1)$ is thus of prime-to-$p$ index in $G$, and we get a short exact sequence of $(k,H)$-modules 
$$0\lra L_1\lra V_1 \lra V_1/L_1 \lra 0.$$
As in the previous proofs, we can consider an open subgroup $G_0$ of $G$, of prime-to-$p$ index, for which the two characters giving the action on $L_1$ and  on $V_1/L_1$ are trivial. We can now apply Theorem \ref{main}.

We now assume that $k=\mathbb{F}_2$ and that $V_1$ is a $4$-dimensional $(\mathbb{F}_2,G)$-module. Again, there is a plane $P_1 \subset V_1$ stabilized by an open subgroup $G_0$ of $G$, of odd index. The continuous representation $V_1$ fits into a short exact sequence of $(\mathbb{F}_2,G_0)$-modules $$0\lra P_1\lra V_1 \lra V_1/P_1 \lra 0.$$ Replacing $G_0$ by an open subgroup of odd index, we can moreover assume that the $2$-dimensional $(\mathbb{F}_2,G_0)$-modules $P_1$ and $V_1/P_1$ both admit an $\F_2$-basis permuted by $G_0$. It remains, once more, to invoke Theorem \ref{main}.
\end{proof}

\begin{rem}
Theorem \ref{t1} applies, in particular, to profinite groups of the types a), b), c) and d) given in the Introduction. In particular, it applies to Galois representations. To conclude, we offer another application below.
\end{rem}

\begin{coro}[Zariski-local lifting of local systems of low dimension].\label{tf}\\
Let $k$ be a field of characteristic $p$ and let $S$ be a  scheme, defined either over $\F_p$ or $\Z[\frac 1 p]$. Let $\mathcal L$ be a local system over $S$ with coefficients in $k$, of dimension $2$. \\  
(Equivalently, $\mathcal L$ is given by a representation $\pi_1(S) \lra \GL_2(k)$)\\
Then, Zariski-locally on $S$, $\mathcal L$  lifts to  a local system with coefficients in $\W_2(k)$.\\ Furthermore,  Zariski-locally on $S$, $\mathcal L$ stably lifts to a local system with coefficients in $\W(k)$.

If $k=\mathbb{F}_2$, the same result holds, for local systems of dimension up to  $4$.
\end{coro}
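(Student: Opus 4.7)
The plan is to combine Theorem \ref{t1} with the geometric smoothness results (Propositions \ref{semilocsmooth} and \ref{charpsmooth}), after localizing $S$ at each point. Writing $\mathcal{L}$ as a continuous representation $\rho:\pi_1(S)\to\GL_2(k)$ (or $\GL_4(\F_2)$ in the exceptional case), "lifting $\mathcal{L}$ on an open $U\subset S$" means lifting the restriction of $\rho$ through $\pi_1(U)\to\pi_1(S)$. So the problem reduces, for each $s\in S$, to finding a Zariski-open $U\ni s$ whose fundamental group is $(1,\infty)$-smooth — then Theorem \ref{t1} finishes the job.

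When $S$ lies over $\F_p$, this is immediate: every point has an affine open neighborhood $U=\Spec A$ with $A$ an $\F_p$-algebra, and $\pi_1(U)$ is of $p$-cohomological dimension $\leq 1$ by Proposition \ref{charpsmooth}, hence $(1,\infty)$-smooth. Applying Theorem \ref{t1} to $\rho|_{\pi_1(U)}$ yields both the $\W_2(k)$-lift and the stable $\W(k)$-lift on $U$.

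When $S$ lies over $\Z[\tfrac 1 p]$, an affine open is not semilocal, so Proposition \ref{semilocsmooth} does not apply directly. Instead, for each $s\in S$, I would first work on the semilocal scheme $\Spec\OO_{S,s}$: its fundamental group is $(1,\infty)$-smooth by Proposition \ref{semilocsmooth}, so Theorem \ref{t1} produces a continuous lift $\tilde\rho_s:\pi_1(\Spec\OO_{S,s})\to\GL_2(\W_2(k))$ of $\rho|_{\Spec\OO_{S,s}}$. By Definition \ref{cont}, $\tilde\rho_s$ has open kernel, so it factors through a finite étale Galois cover $T\to\Spec\OO_{S,s}$. Since $\Spec\OO_{S,s}=\varprojlim U$ over affine open neighborhoods $U\ni s$, standard spreading-out theorems for finite étale covers descend $T$ together with its Galois action to a finite étale cover over some such $U$. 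After possibly shrinking $U$ further, the isomorphism between the mod-$p$ reduction of the resulting $\tilde\rho_U$ and $\rho|_U$ also descends, giving the required Zariski-local lift. The stable assertion follows identically, using the $\W(k)$-half of Theorem \ref{t1}.

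The $k=\F_2$, $\dim\mathcal{L}\leq 4$ statement requires no separate argument, as Theorem \ref{t1} already covers it and the reductions above are agnostic to rank. The main technical subtlety I expect is this spreading-out step: existence of $\tilde\rho_s$ over the local ring is automatic, but passing to a genuine Zariski open requires that both the finite étale cover and the identification of its reduction modulo $p$ with $\rho|_U$ descend. Both descents are standard and are ultimately underpinned by the finiteness of the image of $\tilde\rho_s$, together with the principle that finite étale data on a cofiltered limit of quasi-compact quasi-separated schemes descends to some finite stage.
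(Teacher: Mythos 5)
Your argument is essentially the paper's own proof: localize at each point $s\in S$, apply Theorem \ref{t1} to the fundamental group of $\Spec(\OO_{S,s})$ (smooth by Propositions \ref{semilocsmooth}/\ref{charpsmooth}), and then spread the resulting lift out to a Zariski open neighborhood using the fact that finite \'etale covers of $\Spec(\OO_{S,s})$ extend to some open $U\ni s$. Your only deviation --- handling the $\F_p$ case directly on an affine open via Proposition \ref{charpsmooth}, thereby skipping the spreading-out step there --- is a harmless (and slightly cleaner) variant of the same approach.
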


\begin{proof}
By Proposition \ref{semiloccyclotomic}, combined with Theorem \ref{t1}, we know that, for each point $s \in S$, the stalk of $\mathcal L$ at $s$ (which is a local system over $\Spec(\mathcal O_{S,s}$)) lifts as stated.  To conclude, use the fact that any finite \'etale cover of $\Spec(\mathcal O_{S,s})$ extends to an open $U \subset S$ containing $s$.
\end{proof}

\begin{rem}

We did not attempt to make the assumptions on $S$ optimal. For instance, the result clearly extends to schemes $S$ where $p$ is nilpotent. 

\end{rem}


\section*{Acknowledgments} We thank the referee for her/his careful reading and helpful suggestions. The idea of considering fundamental groups of smooth curves over an algebraically closed field as cyclotomic profinite occured during an enjoyable discussion with Adam Topaz, a while ago.


\begin{thebibliography}{2}
\bibitem[Bh]{Bh} B. Bhatt, Non-liftability of vector bundles to the Witt vectors. Available on the author webpage \url{http://www-personal.umich.edu/~bhattb/math/witt-vector-bundle-example.pdf}\\

\bibitem[DCF]{DCF} C. De Clercq and M. Florence, Smooth profinite groups and applications. Available on the arXiv at \url{https://arxiv.org/abs/1710.10631}, 2018.\\

\bibitem[DCF1]{DCF1} C. De Clercq and M. Florence, Smooth profinite groups I, geometrizing Kummer theory. Available on the arXiv at \url{https://arxiv.org/abs/2009.11130}, 2021.\\

\bibitem[DCF3]{DCF3}  C. De Clercq and M. Florence, Smooth profinite groups III, the Smoothness Theorem. Available on the arXiv at \url{https://arxiv.org/abs/2012.11027}, 2021.\\

\bibitem[F2]{F2}  M. Florence, Smooth profinite groups II, the Uplifting Theorem. Available on the arXiv at \url{https://arxiv.org/abs/2009.11140}, 2021.\\

\bibitem[Gi]{Gi} P. Gille, Le groupe fondamental sauvage d'une courbe affine  en   caractéristique $p>0$, Courbes semi-stables et groupe  fondamental en g\'eom\'etrie alg\'ebrique (Luminy, 1998), 217-231,  Progr. Math., 187, Birkh\"auser, Basel, 2000.  \\

\bibitem[K]{K} C. Khare, Base Change, Lifting and Serre’s Conjecture. J. Number Theory 63, 387–-395, 1997.\\

\bibitem [KL]{KL} \textsc{C. B. Khare, M. Larsen}, \textit{Liftable groups, negligible cohomology and Heisenberg representations,}  Available on the arXiv at \url{https://arxiv.org/abs/2009.01301}, 2020. \\

\bibitem[QW]{QW} C. Quadrelli, T. Weigel, Profinite groups with a cyclotomic $p$-orientation, to appear in Doc. Math.\\

\bibitem[R]{R} R. Ramakrishna, Lifting Galois representations, Invent. Math. 138 no. 3, 537-–562, 1999.\\

\bibitem[Se]{Se} J.-P. Serre, Galois cohomology, Springer-Verlag, Berlin, 1997.\\

\bibitem[Se2]{Se2} J.-P. Serre, Repr\'esentations lin\'eaires des groupes finis, 5\`eme \'edition, Hermann, 1998.\\

\bibitem[Se3]{Se3} J.-P. Serre, Sur la topologie des variétés algébriques en charactéristique $p$, in Symposium Internacional de Topologia Algebraica, Univ. Nac. Aut. de Mexico, 24-–53,  1958.\\

\bibitem[Ve]{Ve} J.-L. Verdier, Des cat\'egories d\'eriv\'ees des cat\'egories ab\'eliennes,  Ast\'erisque 239, Société Mathématique de France, 1996. \\
\end{thebibliography}
\end{document}